\theoremstyle:=definition,remark,plain\do{%
     \expandafter\g@addto@macro\csname th@\theoremstyle\endcsname{%
        \addtolength\thm@preskip\parskip
     }%
   }
\newtheoremstyle{indented}{5pt}{5pt}{\itshape}{2.5em}{\bfseries}{.}{.5em}{}
\theoremstyle{plain}
\newtheorem{theorem}{Theorem}[section]
\newtheorem{lemma}[theorem]{Lemma}
\newtheorem{proposition}[theorem]{Proposition}
\newtheorem{corollary}[theorem]{Corollary}
\newtheorem{definition}[theorem]{Definition}
\theoremstyle{definition}
\newtheorem{example}[theorem]{Example}
\newtheorem{remark}[theorem]{Remark}
\theoremstyle{indented}
\newtheorem*{claim*}{\indent Claim}
\newcommand{\calC}{\mathcal{C}}
\newcommand{\calO}{\mathcal{O}}
\newcommand{\bbA}{\mathbb{A}}
\newcommand{\bbP}{\mathbb{P}}
\newcommand{\Sym}{\mathrm{Sym}}
\renewcommand{\char}{{\rm char}}
\title[The normal map for plane curves and pathologies in positive characteristic]{The normal map for plane curves \\ and pathologies in positive characteristic}
\author{Edoardo Ballico, Alessandro Oneto}
\address[E. Ballico, A. Oneto]{Universit\`a di Trento, Via Sommarive, 14 - 38123 Povo (Trento), Italy}
\email{edoardo.ballico@unitn.it, alessandro.oneto@unitn.it}
\subjclass[2010]{14H50, 14G17, 14N99}
\keywords{normal lines, bottlenecks, strange curves, dual curves}
\begin{document}

\maketitle

\begin{abstract}
	We study the normal map for plane projective curves, i.e., the map associating to every regular point of the curve the normal line at the point in the dual space. We first observe that the normal map is always birational and then we use this fact to show that for smooth curves of degree higher than four the normal map uniquely determines the curve. Our proof works in characteristic zero and in positive characteristic higher than the degree of the curve. We notice also that in high characteristic strange curves provide examples of different plane curves with same curve of normal lines. We will reinterpret our results also in the modern terminology of bottlenecks of algebraic curves.
\end{abstract}

\section{Introduction}
The study of curves in projective space and their \textit{dual curves} parametrizing tangent lines at regular points in the dual space is very classical. In characteristic zero or in characteristic higher than the degree of the curve it is well-known that a \textit{reciprocity theorem} holds, i.e., the double dual of a curve is equal to the curve, see \cite[Theorem 5.1]{ragni}. This is not true in general in positive characteristic. For example, in characteristic $2$ all tangents to a non-degenerate smooth plane conic pass through a unique point which is, therefore, equal to the double dual of the conic. Moreover, in any positive characteristic there exists singular curves, called \textit{strange curves}, whose tangent lines at regular points pass through a unique point and for which again reciprocity does not hold. We refer to \cite{kleiman} for more details.

We consider the \textit{curve of normal lines} in the dual space, i.e., the curve parametrizing normal lines to the curve at regular points and we address the following natural question.
\begin{center}
	\textit{Is a plane curve uniquely determined by its curve of normal lines?}
\end{center}
In Theorem \ref{thm:birational}, we see that the map sending a regular point of an integral plane curve to its normal line in the dual space is birational, i.e., a general normal line is normal only at one point of the curve. By using this fact, in Theorem \ref{thm:equal_curves}, we give an affirmative answer to our question in the case of smooth curves of degree at least four. Our proof works over algebraically closed fields of characteristic zero or characteristic strictly higher than the degree of the curve. In Example \ref{ex:pathology}, we observe that instead strange curves provide examples of different (singular) plane curves with the same curve of normal lines. 

In Section \ref{ssec:normal_maps} we introduce all notation and we formulate our main results. In Section \ref{ssec:bottlenecks}, we rephrase our result from the point of view of \textit{bottlenecks} of algebraic curves, in the terminology of \cite{dew20}. In Section \ref{ssec:separability} we give preliminary results about normal maps and in Section \ref{ssec:strange} we present the pathological examples of strange curves. In Section \ref{sec:proofs} we give the proofs of our main theorems.

\subsection{Normal map and curve of normal lines}\label{ssec:normal_maps}
We work over an algebraically closed field $\Bbbk$. Let $V$ be a $3$-dimensional $\Bbbk$-vector space. We fix a basis for the dual space $V^\vee$ such that the symmetric algebra $\Sym^\bullet V^\vee$ is identified with the polynomial ring $\Bbbk[x,y,z]$. Let $V_\infty \subset V$ be a $2$-dimensional vector subspace and $H_\infty = \bbP V_{\infty} \subset \bbP V$ be the \textit{line at infinity}. Fixing a non-degenerate quadric $Q \in \Sym^2 V_\infty^\vee$, we get a notion of orthogonality on $V_\infty$. For example, let $i^2 = -1$ in $\Bbbk$. If we can consider $H_{\infty} = \{z = 0\}$ and $Q_\infty = \{x^2+y^2=0, z = 0\} = \{(1:\pm i:0)\}$, then we recover the usual \textit{Euclidean geometry}. 

We use the standard notation $\langle a,b \rangle$ to denote the line through two given points $\{a,b\}$.

Let $X \subset \bbP V$ be an integral plane curve defined by a homogeneous polynomial $F \in \Sym^dV^\vee$ of degree $d > 1$. For every point $p = (p_0:p_1:p_2) \in X_{\rm reg}$, let $T_{X,p}$ be the \textit{tangent line} to $X$ at $p$ given by 
\[
	F_x(p)x + F_y(p)y + F_z(p)z = 0
\]
where $(F_x,F_y,F_z) := \left(\partial_xF,\partial_yF,\partial_zF\right)$. The \textbf{normal line} to $X$ at $a \in X_{\rm reg}$ is the line given by 
\[
	N_{X,p} = \langle p, n_{X,p} \rangle, \quad \text{ where }n_{X,p}\text{ is the point }(T_{X,p} \cap H_\infty)^\perp \in H_\infty.
\]
In the dual space $\bbP V^\vee$, let $X^\perp$ be the integral curve given by the Zariski closure of the set of normal lines at general points of $X$. We call it the \textbf{curve of normal lines} of $X$. If $u : C \rightarrow X$ is the normalization of $X$, then the map sending a regular point of $X$ to its normal line induces the \textbf{normal map}
\[
	\eta_X : C \rightarrow X^\perp.
\]
Our main results, whose proofs are given in Section \ref{sec:proofs}, are the following.
\begin{theorem}\label{thm:birational}
	Let $X \subset \bbP^2$ be an integral curve of degree $d > 1$. Assume that $\char(\Bbbk) = 0$ or $\char(\Bbbk) > d$. Then, $\eta_X$ is birational onto its image. 
\end{theorem}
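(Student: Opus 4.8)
The plan is to prove birationality by showing that a general line of $X^\perp$ is the normal line at exactly one point of $X$, that is, that $\eta_X$ is generically injective; since $\eta_X\colon C\to X^\perp$ is a dominant morphism of integral curves, generic injectivity is equivalent to it being birational onto its image. First I would record the geometric meaning of a coincidence $N_{X,p}=N_{X,q}$ for distinct regular points $p\neq q$. As $N_{X,p}$ passes through $p$ and $N_{X,q}$ through $q$, the common line must equal $\langle p,q\rangle$, so $p,q$ are collinear on it. Moreover a line meets $H_\infty$ in a single point, so $n_{X,p}=n_{X,q}$; since $(\cdot)^\perp$ is an involution of $H_\infty$, this forces $T_{X,p}\cap H_\infty=T_{X,q}\cap H_\infty$, i.e.\ the tangents at $p$ and $q$ are parallel and the common normal line $\langle p,q\rangle$ is perpendicular to this shared tangent direction. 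In the language of Section \ref{ssec:bottlenecks} such a pair $(p,q)$ is exactly a bottleneck, so $\eta_X$ fails to be birational precisely when a general point $p\in X$ admits a partner $q\neq p$ with parallel tangent and $\langle p,q\rangle$ normal at both, i.e.\ when $X$ carries a one-dimensional family of bottleneck pairs.

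Next I would bring in the hypothesis $\char(\Bbbk)=0$ or $\char(\Bbbk)>d$. Under it the reciprocity theorem (\cite[Theorem 5.1]{ragni}) applies, so the Gauss map of $X$ is separable and birational onto the dual curve; in particular $X$ is not strange, and the tangent-direction map $\tau_\infty\colon C\to H_\infty$, $p\mapsto T_{X,p}\cap H_\infty$, obtained by composing the Gauss map with the linear projection of $\bbP V^\vee$ from the point dual to $H_\infty$, is a separable non-constant morphism. This has two consequences. From a general normal line $\ell$ one recovers the tangent direction of its foot as $w=(\ell\cap H_\infty)^\perp$, and the fibre $\tau_\infty^{-1}(w)$ is a reduced set of $m:=\deg\tau_\infty$ points carrying pairwise distinct tangent lines. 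Hence the admissible feet of $\ell$ are exactly the points of $X\cap\ell$ lying in $\tau_\infty^{-1}(w)$, and non-birationality is equivalent to the assertion that, for general $w\in H_\infty$, two of the $m$ points of $\tau_\infty^{-1}(w)$ are joined by a line of direction $w^\perp$.

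The main obstacle is this last rigidity step: ruling out a one-parameter family of double normals. I would attack it by monodromy. As $w$ ranges over $H_\infty\cong\bbP^1$, each secant pair in $\tau_\infty^{-1}(w)$ determines, on a suitable cover of $H_\infty$ on which the $m$ points are rational, an algebraic connecting-direction map $w\mapsto\langle p_i(w),p_j(w)\rangle\cap H_\infty$ from $H_\infty$ to $H_\infty$, and non-birationality demands that one of these coincide identically with the involution $w\mapsto w^\perp$. I expect to derive a contradiction by analysing the behaviour at the two isotropic directions (the fixed points of $(\cdot)^\perp$, where a perpendicular double normal would degenerate) together with a degree comparison, showing that the identity $\langle p_i,p_j\rangle\cap H_\infty=w^\perp$ can hold for all $w$ only for a smooth conic through the two circular points, i.e.\ a circle, which possesses the antipodal symmetry as an exact automorphism and whose normal map is genuinely two-to-one.

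This is where both hypotheses are sharp. For every integral curve of degree at least three the family of bottleneck pairs is thereby excluded and $\eta_X$ is birational, the conic being the boundary case that must be checked directly. Dropping the characteristic assumption is fatal exactly because $\tau_\infty$ then degenerates: for a strange curve all tangents pass through one point, $\tau_\infty$ is constant or inseparable, the fibres $\tau_\infty^{-1}(w)$ are no longer reduced finite sets, and the recovery of the foot from the normal line breaks down, producing the pathological coincidences of Example \ref{ex:pathology}.
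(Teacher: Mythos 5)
Your reduction is fine as far as it goes: under the characteristic hypothesis $\eta_X$ is separable, so birationality amounts to generic injectivity, and a coincidence $N_{X,p}=N_{X,q}$ with $p\neq q$ does force $\langle p,q\rangle$ to be the common normal and $\tau_\infty(p)=\tau_\infty(q)$, so that a failure of injectivity is a one-parameter family of ``double normals'' with parallel tangents. But the proof stops exactly where the theorem begins. The entire content of the statement is the rigidity step you describe as ``I expect to derive a contradiction by analysing the behaviour at the two isotropic directions together with a degree comparison'': no such analysis is given, the degrees being compared are never identified, and it is not at all clear that either ingredient works. The connecting-direction map $w\mapsto\langle p_i(w),p_j(w)\rangle\cap H_\infty$ is only defined on a cover of $H_\infty$ of degree governed by the monodromy of $\tau_\infty$ (whose fibres have $\nu$ points, $\nu$ the class of $X$, which can be large), so there is no a priori degree obstruction to its descending to the involution $w\mapsto w^\perp$; and the behaviour at the isotropic directions is a local statement about a cover you have not constructed. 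In short, the key idea that actually forces $e=1$ is missing. Your sketch also does not prove the theorem as stated even if completed: you explicitly exempt conics and assert that the circle has a two-to-one normal map, whereas the theorem is claimed for all $d>1$, so at best your plan targets $d\geq 3$.

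For contrast, the paper's mechanism for the rigidity step is entirely different and is worth internalizing. Writing $e$ for the separable degree of $\eta_X$, one forms the quotient $v:C\to D$ by the equivalence ``same normal line'' and pushes the net $|\calO_X(1)|$ forward to a linear system $|W|$ on $D$ with $1\leq\dim|W|\leq 2$. The characteristic hypothesis enters through Hefez's result \cite{hef89} that the order of contact of a general tangent line is exactly $2$ (any higher order would be a power of $p\leq d$); this produces divisors in $|W|$ of orders $0$, $1$ and $2$ at a general point $b=v(a)$, pinning $\dim|W|=2$, while the image of the divisor $X\cap N_{X,a}$ has order $e$ at $b$. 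Any $e\geq 2$ then either forces $\dim|W|\geq 3$ or contradicts injectivity of the pushforward, so $e=1$. That dimension count on $D$ is the substitute for the contradiction you were hoping to extract from monodromy, and nothing in your proposal plays its role.
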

\begin{theorem}\label{thm:equal_curves}
	Let $X, Y$ be smooth plane curves both of degree larger than $4$. Assume that $\char(\Bbbk) = 0$ or $\char(\Bbbk) > \max\{\deg(X), \deg(Y)\}$. If $X^\perp = Y^\perp$, then $X = Y$. 
\end{theorem}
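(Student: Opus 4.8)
The plan is to first convert the hypothesis $X^\perp = Y^\perp$ into an isomorphism between $X$ and $Y$ that carries extra geometric data, and then to rigidify this isomorphism into an equality by a degree computation. Write $Z := X^\perp = Y^\perp$ and let $F,G$ be the forms defining $X,Y$. By Theorem \ref{thm:birational} both normal maps $\eta_X : X \to Z$ and $\eta_Y : Y \to Z$ are birational (here the normalizations are $X$ and $Y$ themselves, as these are smooth), so $f := \eta_Y^{-1}\circ\eta_X : X \dashrightarrow Y$ is a birational map of smooth projective curves, hence an isomorphism. In particular $X$ and $Y$ have the same genus; since both have degree $>4$ and the genus $\binom{d-1}{2}$ of a smooth plane curve is strictly increasing for $d\ge 3$, I conclude $\deg X = \deg Y =: d$.

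Next I read off what $f$ does geometrically. For general $p\in X$, set $N := N_{X,p} = \eta_X(p)$ and $q := f(p)$; then $N_{Y,q}=N$ as well. From the definition of the normal line this forces two things: $p$ and $q$ both lie on $N$, and $(T_{X,p}\cap H_\infty)^\perp = N\cap H_\infty = (T_{Y,q}\cap H_\infty)^\perp$. Since $\perp$ is an involution of $H_\infty$, the latter gives $T_{X,p}\cap H_\infty = T_{Y,q}\cap H_\infty$, i.e.\ the tangents to $X$ at $p$ and to $Y$ at $q$ are parallel, while $p,q$ lie on their common perpendicular $N$. This is exactly the configuration of a curve and one of its parallel (offset) curves: $Y$ is swept out from $X$ by moving each point along its normal. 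A short computation with the first-order data — differentiating the relation $q = p + \lambda\,\nabla F(p)$ along $X$ in an affine chart and imposing that $\nabla G(q)$ stay proportional to $\nabla F(p)$ — shows that the offset ``distance'' $\lambda$ is constant; this uses only the first derivatives of the defining forms and so remains valid in characteristic $0$ or $>d$, where the Euler relation and reflexivity are available.

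The heart of the matter is then to show that a constant offset $Y = O_c(X)$ of a smooth degree-$d$ curve cannot itself be a smooth curve of degree $d$ unless $c=0$, and this is where the bound $d>4$ enters. The natural tool is the behaviour at the two circular points $I,J\in H_\infty$, the fixed points of the orthogonality $\perp$ (over the Euclidean model, $I,J=(1:\pm i:0)$): offsetting fixes the tangent directions and translates each tangent line by $c$ along its normal, so it alters $X$ only through its contact with the isotropic lines $\langle\,\cdot\,,I\rangle$ and $\langle\,\cdot\,,J\rangle$. Computing $\deg Y$ in terms of $d$ and these contact orders (for instance via the support function, where $h_Y = h_X + c$, or by intersecting $Y$ with a general isotropic line), one finds that the generic value of $\deg Y$ is far larger than $d$ and can be depressed to $d$ only by forcing contact at $I,J$ beyond what the smoothness of a degree-$d$ curve permits once $d>4$; hence $c=0$, $f=\mathrm{id}$, and $X=Y$.

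I expect this last step to be the main obstacle: carrying out the offset-degree computation carefully, tracking the intersection multiplicities at $I$ and $J$, and extracting from it the sharp threshold $d>4$ (the low-degree exceptions, such as circles among conics, whose offsets stay conics, show that the bound cannot simply be dropped). A secondary technical point is to make the constant-distance step and the duality input (the class $d(d-1)$ and reflexivity of $X,Y$) rigorous in characteristic $>d$ rather than relying on metric differential geometry; the hypothesis $\char\Bbbk>d$ is precisely what guarantees the Euler relation and the reciprocity theorem cited in the introduction.
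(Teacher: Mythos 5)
Your proposal diverges from the paper's argument after the first step, and in its own terms it is not a proof: the two places where all the difficulty lives are left as announced computations. First, the claim that the offset ``distance'' $\lambda$ is constant is only sketched, and as stated it does not quite make sense algebraically: writing $q = p + \lambda\,\nabla F(p)$ presupposes a canonical normalization of the normal vector (a ``unit'' normal), which requires extracting a square root of $Q$ evaluated on the tangent direction and is not available over an arbitrary algebraically closed field; you would need to reformulate this via a rational local normalization or via the support function in isotropic coordinates before differentiating. Second, and more seriously, the step you yourself flag as ``the main obstacle'' --- that a nonzero constant offset of a smooth degree-$d$ curve cannot again be a smooth curve of degree $d$ once $d>4$ --- is exactly the content of the theorem in your reformulation, and no argument is given: you would have to actually compute the degree of the offset in terms of $d$ and the contact orders of $X$ with the isotropic lines at the circular points, show that depressing it to $d$ forces contact incompatible with smoothness, and do all of this in characteristic $p>d$ where the classical metric arguments (Frenet frame, support function) must be replaced by algebraic ones. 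Until those two computations are carried out, the proposal is a plan, not a proof.

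For contrast, the paper avoids offsets entirely. After obtaining, as you do, a birational hence biregular map $f : X \to Y$ matching points with the same normal line (and $\deg X = \deg Y$, which the paper gets from $\deg X^\perp = d^2$ via Lemma \ref{lemma:product_degrees} rather than from the genus formula --- your genus argument is a fine substitute), it invokes the uniqueness of the net $|\calO_X(1)|$ of degree-$d$ divisors on a smooth plane curve of degree $\geq 4$ to conclude that $f$ is induced by a linear automorphism $h$ of $\bbP V$; the condition $X^\perp = Y^\perp$ then forces the dual automorphism $h^\vee$ to act on $X^\perp$, and a short rigidity argument identifies $h$ with the identity, giving $Y = h(X) = X$. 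This is where the degree bound enters in the paper --- uniqueness of the $g^2_d$ --- a completely different mechanism from the contact-at-circular-points analysis you propose. If you want to salvage your route, the missing offset-degree computation would itself be a nontrivial result (essentially the formula for the degree of the offset curve and its dependence on the intersection of $X$ with the isotropic conic), whereas the linear-system argument reduces everything to a standard quoted fact.
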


In Example \ref{ex:pathology}, we see that Theorem \ref{thm:equal_curves} does not hold in general. Indeed, in any positive characteristic we can find pairs of different (singular) curves of degree larger than the characteristic having the same curve of normal lines. This pathological behaviour is again observed within strange curves. 

\subsection{Bottlenecks of curves}\label{ssec:bottlenecks}
We rephrase our results in the terminology of \textit{bottlenecks} of \cite{dew20}. 

Given an integral curve $X$ in $\bbP V$, a line $\ell$ is a \textit{bottleneck} for $X$ if $\ell = N_{X,a} = N_{X,b}$ for $a \in X_{\rm reg}, b \in X_{\rm reg}$. Let $B(X) \subset \bbP V^\vee$ be the \textit{closure of the set of bottlenecks}. Theorem \ref{thm:birational} says that~for every plane curve the normal line at a general point of the curve is not a \textit{bottleneck}, i.e., it is not normal to the curve at another point. In other words, if $\char(\Bbbk) = 0$ or $\char(\Bbbk) > \deg(X)$, then $B(X)$ is a finite set of points.

In Example \ref{ex:pathology}, we notice that this is no longer the case if we consider reducible curves. Indeed, the notion of bottlenecks can be naturally extended to the case of pairs of integral curves or, more in general, for pairs of equidimensional varieties. Given two integral curves $X$ and $Y$ in $\bbP V$, we call \textit{bottleneck} between $X$ and $Y$ a line $\ell$ such that $\ell = N_{X,a} = N_{Y,b}$ where $a \in X_{\rm reg}, b \in Y_{\rm reg}$. We consider the closure $B(X,Y) \subseteq \bbP V^\vee$ of the set of bottlenecks between $X$ and $Y$ in the dual space. For higher equidimensional varieties, we would consider this definition in the Grassmannian of linear spaces of complementary dimension. Obviously, $B(X,Y) \subseteq X^\perp \cap Y^\perp$ and $B(X \cup Y) = B(X,Y) \cup B(X) \cup B(Y)$.

In any odd positive characteristic, Example~\ref{ex:pathology} provides pairs of different curves $X$ and $Y$ having the same curve of normal lines. In particular, they are pairs of strange curves with same strange point and for which $B(X,Y) = B(X \cup Y) = X^\perp = Y^\perp$ is the line dual to the unique strange point.

From Theorem \ref{thm:equal_curves}, we deduce that the latter pathology does not appear if we consider smooth curves. In particular, if we consider $X$ and $Y$ two different integral smooth plane curves such that $\char(\Bbbk) = 0$ or $\char(\Bbbk) > \max\{\deg(X), \deg(Y)\}$ then $X^\perp \neq Y^\perp$ and their set-theoretic intersection in a set of points. In particular, $B(X,Y)$ is a set of points. Since, by Theorem \ref{thm:birational}, $B(X)$ and $B(Y)$ are also sets of points, we deduce that $B(X \cup Y)$ is a finite set.

\section{Preliminaries}

\subsection{Separability of normal maps}\label{ssec:separability}
Consider the \textit{conormal variety} of $X \subseteq \bbP V$
\[
	\calC_X = \overline{\{(p,H) ~:~ p \in X_{\rm reg}, H \supset T_{X,p}\}} \subseteq \bbP V \times \bbP V^\vee.
\]
Consider the projection on the second factor
\[
	\pi_X : \calC_X \rightarrow \bbP V^\vee, \quad (p,H) \mapsto H.
\]
The variety $X^\vee := \pi_X(\calC_X)$ is called the \textit{dual variety} of $X$. A variety $X \subseteq \bbP V$ is called \textit{reflexive} if $C_X = C_{X^\vee}$, which implies that $(X^\vee)^\vee = X$. It is classically known that in characteristic zero, any variety is reflexive (see \cite[Section 16.20]{harris}. Examples of non-reflexive varieties in positive characteristic are given by \textit{strange curves}, i.e., curves whose tangent lines at regular points pass through a given point (see \cite[Section IV.3]{hart}). The dual curve of a strange curve is a hyperplane whose dual is a point. An important result is the \textit{Monge-Segre-Wallace criterion} saying that a variety $X$ is reflexive if and only if the map $\pi_X$ is generically smooth or, equivalently, \textit{separable} (see \cite[page~169]{kleiman}). 

Since $X$ is assumed to be an integral plane curve, the map $\pi_X$ is birational to the rational map sending a regular point of $X$ to its tangent space, which can be extended to the \textbf{tangent map}
\[
	\tau_X : C \rightarrow X^\vee,
\]
where $u : C \rightarrow X$ is again the normalization of $X$.
\begin{lemma}\label{lemma:separability}
	The map $\tau_X$ is separable if and only if $\eta_X$ is separable. I.e., $X$ is reflexive if and only if $\eta_X$ is separable.
\end{lemma}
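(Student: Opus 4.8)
The plan is to reduce the statement to a concrete differential (Wronskian) criterion along the normalization and then to compare the resulting conditions for $\tau_X$ and $\eta_X$. Since $u\colon C\to X$ is birational, I may work with a local parametrization $t\mapsto p(t)=(p_0(t),p_1(t),p_2(t))$ of a general branch of $X$, so that both maps become $t\mapsto[g(t)]$ and $t\mapsto[N(t)]$, where $g=(F_x,F_y,F_z)\circ p$ is the gradient and $N=p\times n$ with $n=\rho(g)$; here $\rho$ is the fixed linear automorphism of $V_\infty$ induced by $Q$-orthogonality, extended by zero on the $z$-coordinate. Recall that a nonconstant map $t\mapsto[v(t)]$ to $\bbP V^\vee$ is separable precisely when the vector $v\times\dot v$ does not vanish identically (equivalently, $v$ and $\dot v$ are generically linearly independent), so it suffices to compare $g\times\dot g$ and $N\times\dot N$ as rational functions on $C$.

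Next I would extract the two key identities. Differentiating $F(p(t))\equiv0$ gives $\dot p\cdot g=0$, while Euler's identity gives $p\cdot g=0$; hence $g$ is proportional to $p\times\dot p$, and a short computation with the vector triple product yields $g\times\dot g\doteq\det(p,\dot p,\ddot p)\,p$, where $\doteq$ denotes equality up to a nowhere-vanishing factor. Thus $\tau_X$ is separable if and only if the \emph{reflexivity determinant} $\Delta:=\det(p,\dot p,\ddot p)$ is not identically zero, a concrete incarnation of the Monge--Segre--Wallace criterion. Expanding $N\times\dot N$ with $N=p\times n$ and $n=\rho(g)\doteq\rho(p\times\dot p)$ in the same way produces a decomposition of the form $N\times\dot N\doteq A\,n+B\,p$, in which the coefficient $B$ is a nonzero scalar multiple of $p_2^{\,2}\,\Delta$, while $A=\det(p,\dot p,n)$ is governed by the quadric $Q$ evaluated along the curve.

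Because $p\notin H_\infty$ and $n\in H_\infty$ at a general point, $p$ and $n$ are linearly independent there, so $N\times\dot N$ vanishes identically if and only if both $A$ and $B$ do. This already gives one implication cleanly: if $\tau_X$ is separable then $\Delta\not\equiv0$, hence $B\neq0$ and $\eta_X$ is separable. The main obstacle is the reverse implication, i.e.\ showing that when $X$ is non-reflexive ($\Delta\equiv0$) the remaining coefficient $A=\det(p,\dot p,n)$ also vanishes identically, so that $\eta_X$ is genuinely inseparable. I expect to handle this by exploiting that $\Delta\equiv0$ forces $\ddot p\in\langle p,\dot p\rangle$ and by analyzing how the tangent direction degenerates for non-reflexive (strange) curves, carefully tracking the interaction between the gradient $g$ and the fixed orthogonality $\rho$; this is the delicate point on which the equivalence really turns. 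Finally I would restate the conclusion through Monge--Segre--Wallace to obtain the phrasing ``$X$ is reflexive if and only if $\eta_X$ is separable''.
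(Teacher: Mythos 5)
Your forward implication is essentially right and, once the vector identities are written out, complete: from $g\doteq p\times\dot p$ one gets $N\times\dot N\doteq \det(p,\dot p,n)\,n+\det(p,n,\dot n)\,p$ with $\det(p,n,\dot n)\doteq p_2^{\,2}\det(p,\dot p,\ddot p)$, and since $p\notin H_\infty$ while $0\neq n\in H_\infty$ at a general point, $\Delta\not\equiv0$ forces $N\times\dot N\not\equiv0$. This is already more careful than the paper's own argument, which passes to affine coordinates and replaces $\tau_X$ and $\eta_X$ by the direction maps $t\mapsto(x',y')$ and $t\mapsto(-y',x')$, i.e., by their compositions with the linear projection $\bbP V^\vee\dashrightarrow H_\infty$ from the point $[H_\infty]$, and then observes that these differ by a rotation.

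The genuine gap is the reverse implication, which you explicitly leave open --- and your own decomposition shows that it cannot be closed. When $\Delta\equiv0$ the term $B\,p$ dies, but $A=\det(p,\dot p,n)=(p\times\dot p)\cdot n\doteq g_0^2+g_1^2$ (writing $g=(g_0,g_1,g_2)$ for the tangent covector and taking $Q=x^2+y^2$), and $(g_0+ig_1)(g_0-ig_1)$ vanishes identically only when every tangent line passes through one of the two points of $Q_\infty$, i.e., only when $X$ is strange with strange point lying on $Q$. For every other non-reflexive curve $A\not\equiv0$, so $\eta_X$ is separable while $\tau_X$ is not, and the ``only if'' half of the Lemma fails. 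The paper's own Example \ref{ex:high_separable} is such a counterexample: $x^ty^{pe}+z^{pe+t}$ is strange with strange point $(0:1:0)\notin Q_\infty$, hence non-reflexive, yet its normal map $(x_0,y_0)\mapsto\{y=y_0\}$ is separable of degree $t$ (birational for $t=1$). The paper's proof misses this because projecting to directions discards the base point of the line: for these curves the two direction maps are constant, but $\eta_X$ itself is not inseparable. Only the true direction (reflexive $\Rightarrow\eta_X$ separable) is used later in the paper, so the main theorems are unaffected; and if you want a correct biconditional, your computation delivers one: $\eta_X$ is inseparable if and only if $X$ is a strange curve whose strange point lies on $Q_\infty$, in which case $\Delta\equiv0$ follows automatically.
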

\begin{proof}
	Recall that in order to prove that a morphism is separable, it is enough to prove that the differential map is surjective on a smooth point (see e.g. \cite[Proposition 6.2.27]{perrin}). Fix $p \in X_{\rm reg}$ a general point of $X$. Consider a system of coordinates such that $p = (0,0) \in \bbA^2$. Since $X$ is smooth at $p$, the completion of the local ring is $\hat{\calO}_{X,p} \simeq \Bbbk\llbracket t\rrbracket$. Hence, in the formal variable $t$, $X_{\rm reg}$ is locally defined in $p$ by the image of a map $t \mapsto (x(t),y(t))$ where $x,y \in \Bbbk\llbracket t\rrbracket$. Hence, we have that locally in $p$ the tangent map $\tau_X$ is the map $t \mapsto (x'(t),y'(t))$, while the normal map $\eta_X$ is the map $t \mapsto (-y'(t),x'(t))$. Since $\char(\Bbbk) \neq 2$, we can write the differential maps $(d\tau_X)_p : t \mapsto (x''(t),y''(t))$ and $(d\eta_X)_p : t \mapsto (-y''(t),x''(t))$. Clearly, $(d\tau_X)_p$ is injective if and only if $(d\eta_X)_p$ is injective.
\end{proof}
Since reflexivity of $X$ is guaranteed by $\char(\Bbbk) = 0$ or $\char(\Bbbk) > d$, we immediately get the following.
\begin{corollary}
	If $\char(\Bbbk) = 0$ or $\char(\Bbbk) > d$, then $\eta_X$ is separable.
\end{corollary}

\begin{lemma}\label{lemma:product_degrees}
	Let $X$ be an integral projective plane curve of degree $d$ and class $\nu$. Assume that $\eta_X$ is separable. Then, $\deg(\eta_X) \cdot \deg(X^\perp) = d + \nu.$
\end{lemma}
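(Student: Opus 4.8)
The plan is to compute $\deg(\eta_X)\cdot\deg(X^\perp)$ as the number of normal lines of $X$ passing through a general point $q\in\bbP V$, and then to evaluate that number by a coincidence count on the normalization $C$. For the reduction, observe that the hyperplane $\Lambda_q\subset\bbP V^\vee$ dual to $q$ is precisely the set of lines through $q$, so $\eta_X^{-1}(\Lambda_q)=\{p\in C: q\in N_{X,p}\}$, and by the projection formula $\deg\bigl(\eta_X^*\calO_{\bbP V^\vee}(1)\bigr)=\deg(\eta_X)\cdot\deg(X^\perp)$. Since $\eta_X$ is separable and $q$ is general, $\Lambda_q$ meets $X^\perp$ transversally in $\deg(X^\perp)$ points, each with $\deg(\eta_X)$ distinct preimages under the generically \'etale map $\eta_X$; hence this degree equals the cardinality of $\{p\in C: q\in N_{X,p}\}$.

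To count that set I would introduce two morphisms from $C$ to the line at infinity $H_\infty\cong\bbP^1$. The first is the projection from $q$, namely $g\colon p\mapsto \langle u(p),q\rangle\cap H_\infty$, recording the direction from $q$ to the point; the second is $h\colon p\mapsto n_{X,p}$, the $Q$-perpendicular of the tangent direction $T_{X,p}\cap H_\infty$. For $q$ general, hence finite and off $X$, one has $q\in N_{X,p}$ if and only if $n_{X,p}\in\langle u(p),q\rangle$, i.e. if and only if $g(p)=h(p)$. Thus $\{p: q\in N_{X,p}\}$ is the coincidence locus of $g$ and $h$, and as $g\neq h$ for general $q$, its class is $(g,h)^*[\Delta]$ for the diagonal $\Delta\subset\bbP^1\times\bbP^1$. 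Since $[\Delta]=\calO(1,1)$, this number equals $\deg\bigl(g^*\calO_{\bbP^1}(1)\bigr)+\deg\bigl(h^*\calO_{\bbP^1}(1)\bigr)$.

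It remains to identify the two summands. A general fibre of $g$ consists of the points of $X$ lying on the corresponding line through $q$, so $\deg\bigl(g^*\calO_{\bbP^1}(1)\bigr)=d$. For $h$ I would write $h=\sigma\circ t$, where $t\colon p\mapsto T_{X,p}\cap H_\infty$ and $\sigma$ is the perpendicularity involution of $H_\infty$ determined by $Q$; as $\sigma$ is an isomorphism and $t$ is $\tau_X$ followed by the linear projection $\bbP V^\vee\dashrightarrow H_\infty$ sending a line to its point at infinity, one gets $\deg\bigl(h^*\calO_{\bbP^1}(1)\bigr)=\deg\bigl(\tau_X^*\calO_{\bbP V^\vee}(1)\bigr)=\deg(\tau_X)\cdot\nu$. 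The key point, which I expect to be the main obstacle, is that this equals $\nu$: by Lemma~\ref{lemma:separability} separability of $\eta_X$ is equivalent to reflexivity of $X$, and reflexivity forces the tangent map $\tau_X$ to be birational onto $X^\vee$ (the two projections of the conormal variety exchange roles), whence $\deg(\tau_X)=1$. Combining the summands yields coincidence number $d+\nu$, so $\deg(\eta_X)\cdot\deg(X^\perp)=d+\nu$. The last thing to verify is that for general $q$ all coincidences are transverse, so that the intersection number agrees with the honest cardinality; this transversality and the birationality of $\tau_X$ are the only places where the separability hypothesis enters.
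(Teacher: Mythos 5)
Your argument is correct in outline and reaches $d+\nu$ by the same overall strategy as the paper: reduce $\deg(\eta_X)\cdot\deg(X^\perp)$, via separability, to the honest number of points of $C$ whose normal line passes through a general $q\in\bbP V$, then evaluate that number by a coincidence count. Where you differ is in the implementation of the count. The paper forms the correspondence $\Gamma=\overline{\{(p,x):T_{X,x}\perp\langle q,p\rangle\}}\subset C\times C$, computes its bidegree $(\nu,d)$, argues that its valence vanishes (the divisors $\gamma(p)$ are cut out by polar curves, hence lie in a fixed linear equivalence class), and intersects with the diagonal. Your correspondence is literally $(g\times h)^{-1}(\Delta_{\bbP^1})$, so by pushing the computation down to $\bbP^1\times\bbP^1$, where $[\Delta]=\calO(1,1)$, you get $\deg g^*\calO(1)+\deg h^*\calO(1)=d+\nu$ directly; this bypasses the valence machinery and in fact explains why the paper's valence is zero. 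You are also more explicit than the paper on a point it uses silently: the identity $\deg h^*\calO(1)=\nu$ requires $\deg(\tau_X)=1$, which you correctly derive from reflexivity via the two projections of the conormal variety, whereas the paper only records separability of $\tau_X$ and then counts $\nu$ (not $\deg(\tau_X)\cdot\nu$) tangent lines through $a_p$ without comment.

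One caveat, which your proof shares with the paper's rather than introduces: both counts tacitly assume $X$ is in general position with respect to the fixed pair $(H_\infty,Q)$. If $H_\infty$ is tangent to $X$, then $[H_\infty]\in X^\vee$ is the centre of the projection $\bbP V^\vee\dashrightarrow H_\infty$ and the degree of $h$ drops below $\nu$. And if $X$ meets $H_\infty$ at a point $p$ of $Q$, then $g(p)=p$ and $h(p)=p^\perp=p$ for every $q$, so $p$ is a spurious coincidence contributing no genuine normal through $q$. The circle $x^2+y^2=z^2$ with the standard Euclidean $(H_\infty,Q)$ realizes the second failure: $d+\nu=4$, yet all normals pass through the centre, so $\deg(\eta_X)\cdot\deg(X^\perp)=2\cdot 1=2$. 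Thus your closing transversality check should be accompanied by the hypotheses $X\cap H_\infty\cap Q=\emptyset$ and $H_\infty\not\subset X^\vee$ (implicit in the paper as well); with those in place your argument is complete.
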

\begin{proof}
	By Lemma \ref{lemma:separability}, $\tau_X$ is separable and $\deg(X^\vee) = \nu$. We have to prove that for a general $o \in \bbP V$ there are $d+\nu$ points $p \in X_{\rm reg}$ such that the normal to $T_{X,p}$ passes through $o$. We identify such pairs with the fixed points of the following correspondence on $C \times C$. Let $p \in X_{\rm reg}$. The pencil of lines normal to $\langle o,p \rangle$ has a base point $a_p$. Let $U \subseteq X_{\rm reg}$ be a open subset such that there are $\nu$ points on $X$ whose tangent lines pass through $a_p$ whenever $p \in U$. We call $\gamma(p)$ such divisor of degree $\nu$ on $X$. We extend this correspondence to $C$. This defines the divisor $\Gamma \subseteq C \times C$ given by the closure of 
	\[
		\gamma = \{(p,x) ~:~ p \in U, x \in U, T_{X,x} \perp \langle o,p \rangle\}.
	\]
	Let $E$ and $F$ be the fibers of the two projections of $C \times C$ on each factor. We compute their intersections with the divisor $\Gamma$.
	\begin{itemize}
		\item $\#(\Gamma \cap E) = \nu$. As we said, fixed a general point $p \in X_{\rm reg}$, then $\gamma(p)$ has degree $\nu$.
		\item $\#(\Gamma \cap F) = d$. Fix $x \in X_{\rm reg}$ such that $(p,x) \in \gamma$. Let $\{p_1,\ldots,p_d\} = X \cap \langle o,p \rangle$. Then, from the construction above we have that $a_p = a_{p_i}$ for all $i \in \{1,\ldots,d\}$. Hence, $(p_i,x) \in \gamma$ for all $i \in \{1,\ldots,d\}$, i.e., the number of points on $X$ for which line connecting to $o$ is normal to $X$ in $x$ is equal to $d$.
	\end{itemize}
	If $\Delta$ is the diagonal in $C \times C$, then $\Gamma \sim aE + bF - k\Delta$ where the number $k$ is called \textit{valence} of $\Gamma$ (see \cite[page 284]{gh}). In this case, we have that $k = 0$. Indeed, for a general $p \in U$, the divisor $\gamma(p)$ is independent of $p$ because such divisor is cut out on $X$, outside the contribution of the singularities of $X$, by the polar curve of $X$ with respect to the point $a_p$. Hence, $\Gamma \sim \nu E + d F$ and the fixed points of the correspondence are $\#(\Gamma \cap \Delta) = d + \nu$. This concludes the proof.
\end{proof}

In characteristic $0$ the \textit{separable degree} of $\eta_X$ is always equal to the degree of $\eta_X$. If $\char(\Bbbk) = p > 0$, then $\deg \eta_X = p^ct$ where $t$ is the separable degree and $p^c$ is the \textit{inseparable degree}. The separable degree counts the elements in a general fiber of the map, i.e., the number of points at which a general normal line is normal to the curve. Note that Lemma \ref{lemma:separability} tells us that the normal map $\eta_X$ has separable degree equal to its degree if and only if $X$ is reflexive. 

\subsection{Strange curves and their normal lines}\label{ssec:strange}
Recall the definition of \textit{strange curve}.
\begin{definition}[{\cite[page 311]{hart}}]
	A curve $X$ in projective space is \textbf{strange} if there is a point $o$ which lies on all tangent lines of $X$. The point $o$ is called \textbf{strange point} of $X$.
\end{definition}
Assuming that $X$ is not a line, then if a strange point for $X$ exists, then it is unique. It is well-known that strange curves exist only in positive characteristic and the only smooth strange curves are lines and conics in characteristic two \cite{sam,hef89,bh91,hv91}.

The following example produces strange curves with normal map of separable degree $t$, for any $t \geq~1$. 
\begin{example}\label{ex:high_separable}
	Fix $\char(\Bbbk) = p > 2$. Let $e,t \geq 1$ be integers such that $\gcd(p,e) = \gcd(p,t) = 1$. Consider the plane curve $X$ definede by $f = x^ty^{pe}+z^{pe+t}$. The point $o = (0:1:0)$ is the strange point. Indeed,
	\[
		\partial_x(f) = tx^{t-1}y^{pe}, \quad \partial_y(f) = 0, \quad \partial_z(f) = tz^{t-1}.
	\]
	In the chart $\{z = 1\}$, consider a point $(x_0,y_0) \in X$, i.e., $x_0^ty_0^{pe}=-1$. Then, the tangent line at $(x_0,y_0)$ is
	\[
		tx_0^{t-1}y_0^{pe}\cdot x + t = 0 \quad \Rightarrow \quad x = x_0.
	\]
	The line $x = x_0$ meets $X$ at exactly $e$ points since the equation $x_0^ty^{pe} + 1 = 0$ has exactly $e$ roots. At these points the line $x = x_0$ is still tangent. Hence, $\tau_X$ has separable degree equal to $e$. \\
	Assume $Q$ to be the Fermat conic $x^2 + y^2 = 0$ at the line at infinity $z = 0$. Then, the normal line at $(x_0,y_0)$ is the line connecting the point $(x_0:y_0:1)$ and the point $(1:0:0)$, which is the point normal to $(0:1:0)$ with respect to $Q$, i.e., the line $y = y_0$. Now, this line intersects $X$ at exactly $t$ points since the equation $y_0^{pe}x^t+1 = 0$ has exactly $t$ roots. At these points, the line $y=y_0$ is still normal. Hence, $\eta_X$ has separable degree equal to $t$. Moreover, $X^\perp$ is equal to the line of $\bbP V^\vee$ dual to the point $(1:0:0)$. \\ 	
	Note that $t = \deg(f) - pe$ where $pe$ is equal to the multiplicity of the curve $X$ at the point $(1:0:0)$. 
\end{example}
The latter example is a particular case of the following general fact.
\begin{proposition}
	Let $X \subseteq \bbP V_\infty$ be a degree $d \geq 2$ strange curve with strange point $o$. Fix the line at infinity $H_\infty$ and the non-degenerate quadric $Q$ such that $o \in H_\infty$ and $o \not\in Q$. Then, $X^\perp = \ell \subseteq \bbP V^\vee$ is a line. Moreover, if $o_\ell \in \bbP V$ is the point dual to $\ell$, then $\eta_X$ has separable degree $d-\mu$ where $\mu$ is the multiplicity of $X$ at $o_\ell$, with the convension $\mu = 0$ if $o_\ell \not\in X$.
\end{proposition}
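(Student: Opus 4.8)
The plan is to exploit the strangeness of $X$ to force every normal line through a single fixed point, collapsing $X^\perp$ onto a line, and then to read off the separable degree from a Bézout count in the resulting pencil.

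First I would identify the fixed point. Set $q := o^\perp \in H_\infty$, the polar of the strange point $o$ with respect to $Q$; since $o \notin Q$ and $\perp$ is an involution on $H_\infty$, one has $q \neq o$, so $q$ is a genuine point of $H_\infty$. For a general regular point $p \in X_{\rm reg}$ the tangent line $T_{X,p}$ passes through $o$ because $X$ is strange, and since $o \in H_\infty$ while $T_{X,p} \neq H_\infty$ for general $p$, we get $T_{X,p} \cap H_\infty = \{o\}$. Hence $n_{X,p} = o^\perp = q$ is independent of $p$, and the normal line $N_{X,p} = \langle p, q\rangle$ passes through the fixed point $q$.

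This yields the first assertion. All normal lines pass through $q$, so $X^\perp$ is contained in the pencil of lines through $q$, which is the line $\ell \subseteq \bbP V^\vee$ dual to $q$. Since $d \geq 2$, the curve $X$ is not a line, so $X_{\rm reg}$ is not contained in a single line through $q$; thus the normal lines $\langle p, q\rangle$ genuinely vary, $X^\perp$ is one-dimensional, and therefore $X^\perp = \ell$ with $o_\ell = q$. For the separable degree I would count the points of a general fiber of $\eta_X \colon C \to \ell$. A general point of $\ell$ is a general line $N$ through $o_\ell = q$, and for a regular point $p \in X$ one has $N_{X,p} = N$ iff $\langle p,q\rangle = N$, i.e. iff $p \in N$ and $p \neq o_\ell$; so the fiber over $N$ consists of the regular points of $X$ lying on $N$ other than $o_\ell$. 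By Bézout $N$ meets $X$ in $d$ points counted with multiplicity; for general $N$ the intersection multiplicity at $o_\ell$ is exactly $\mu$, and the remaining $d - \mu$ points are distinct regular points of $X$, each a genuine point of the fiber since the normalization is an isomorphism over them. It remains to see that the points of $C$ over $o_\ell$ do not lie in a general fiber: the tangent line to any branch of $X$ at $o_\ell$ passes both through the strange point $o$ and through $o_\ell = o^\perp$, two distinct points of $H_\infty$, so that tangent line is $H_\infty$ itself; consequently every such point of $C$ is sent by $\eta_X$ to the fixed point $H_\infty \in \ell$, which a general $N$ avoids. Hence the general fiber has exactly $d - \mu$ points, which is the claimed separable degree.

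The main obstacle is the last count at $o_\ell$: one must invoke the standard fact that a general line through a point of multiplicity $\mu$ meets the curve there with multiplicity exactly $\mu$ (general directions avoid the tangent cone), and one must confirm that no branch of $X$ at $o_\ell$ contributes to a general fiber, for which the observation that all these branch tangents coincide with $H_\infty$ is the key point. By contrast, the identification of the fixed point $q = o^\perp$ and the collapse of $X^\perp$ onto the dual line $\ell$ are purely formal.
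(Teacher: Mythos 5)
Your proof is correct and follows essentially the same route as the paper's: identify the fixed point $o^\perp\in H_\infty$ through which every normal line passes (so $X^\perp$ collapses to the dual line $\ell$), then compute the separable degree by intersecting a general line of the pencil through $o_\ell$ with $X$ and subtracting the multiplicity $\mu$ at $o_\ell$. The only difference is that you spell out a detail the paper leaves implicit, namely that the branches of $X$ at $o_\ell$ are sent by $\eta_X$ to the point $H_\infty\in\ell$ and hence do not contribute to a general fiber.
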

\begin{proof}
	Since $X$ is not a line, then $X^\perp$ is a curve. Since $Q$ is non-degenerate, there exists a unique point $o' \in H_\infty$ orthogonal to $o$. Since all tangent lines pass through $o$, then all normal lines pass through $o'$. Hence, $X^\perp$ is a line $\ell$ and $o' =: o_\ell$ is the point dual to it via the identification $\bbP V = \bbP V^{\vee\vee}$. \\
	Note that, since $o \not\in Q$, then $o_\ell \neq o$. Let $\mu$ be the multiplicity of $X$ at $o_\ell$, with the convention $\mu = 0$ if $o_\ell \not\in X$. Now, a general normal line of $X$ cut $X$ in exactly $d-\mu$ distinct points and it is normal to $X$ at such points. In other words, $\eta_X$ has separable degree equal to $d-\mu$.
\end{proof}
\begin{corollary}
	Fix a line $\ell \subseteq \bbP V^\vee$ and let $o_\ell \in \bbP V$ the point dual to $\ell$ via the identification $\bbP V = \bbP V^{\vee\vee}$. Fix the line at infinity $H_\infty$ and the non-degenerate quadric $Q$ such that $o_\ell \in H_\infty$ and $o_\ell \not\in Q$. Then, for any $d \geq \char(\Bbbk) = p > 2$ there is a strange curve such that $X^\perp = \ell$. 
\end{corollary}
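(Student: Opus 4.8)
The plan is to reduce the statement to the preceding Proposition by producing a strange curve whose strange point is the point orthogonal to $o_\ell$. Set $o := o_\ell^{\perp} \in H_\infty$, the point orthogonal to $o_\ell$ with respect to $Q$. The polarity induced on $\bbP V_\infty \cong \bbP^1$ by the non-degenerate quadric $Q$ is an involution whose only fixed points are the two points of $Q$; hence the hypothesis $o_\ell \notin Q$ forces both $o \neq o_\ell$ and $o \notin Q$, while $o \in H_\infty$ by construction. Consequently, if $X$ is \emph{any} strange curve of degree $d$ with strange point $o$, then the preceding Proposition applies and shows that $X^\perp$ is the line dual to $o^{\perp} = (o_\ell^{\perp})^{\perp} = o_\ell$, i.e. $X^\perp = \ell$. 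It therefore suffices to construct, for every $d \geq p$, an integral strange curve of degree $d$ whose strange point is the prescribed point $o$.

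Since a projective automorphism $\phi$ of $\bbP V$ carries tangent lines to tangent lines — and hence strange curves to strange curves and strange points to strange points — I may choose $\phi$ with $\phi(0:1:0) = o$ and reduce to exhibiting, for each $d \geq p$, an integral strange curve of degree $d$ with strange point $(0:1:0)$. The tangent line $F_x(q)x + F_y(q)y + F_z(q)z = 0$ passes through $(0:1:0)$ exactly when $F_y(q) = 0$; as $F_y$ has degree $< d = \deg F$, its vanishing along the integral curve $X$ forces $\partial_y F \equiv 0$, which in characteristic $p$ means $F \in \Bbbk[x,z,y^p]$. I would then take the explicit family
\[
    F_d = z^{d-p}\,y^{p} + x^{d} - x\,z^{d-1}, \qquad d \geq p,
\]
which is homogeneous of degree $d$, satisfies $\partial_y F_d = 0$, and (having degree $d \geq 3$) defines a curve that is not a line.

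The single point demanding care — and the step I expect to be the main obstacle — is the irreducibility of $F_d$, especially in the cases $p \mid d$ left uncovered by Example \ref{ex:high_separable}. Writing $F_d = c_1 y^{p} + c_0$ with $c_1 = z^{d-p}$ and $c_0 = x\,(x^{d-1} - z^{d-1})$ and viewing it as a polynomial in $y$ over $\Bbbk[x,z]$, one first checks that it is primitive: the only irreducible factor of $c_1$ is $z$, and $z \nmid c_0$, so $\gcd(c_0,c_1) = 1$. Over $\Bbbk(x,z)$ it is then associate to $y^{p} + c_0/c_1$, and in characteristic $p$ such a polynomial is irreducible as soon as $-c_0/c_1 \notin \Bbbk(x,z)^{p}$. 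Because $\Bbbk$ is algebraically closed we have $\Bbbk(x,z)^{p} = \Bbbk(x^{p},z^{p})$, so every $p$-th power has order divisible by $p$ along the prime $x = 0$; but $-c_0/c_1$ has order exactly $1$ there, since $c_0$ vanishes to first order along $x = 0$ while $c_1$ does not. Hence $F_d$ is irreducible for all $d \geq p$, which completes the construction and, via the reduction of the first paragraph, proves the Corollary.
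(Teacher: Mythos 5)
Your proposal is correct, and it takes a genuinely different --- and in fact more complete --- route than the paper. The paper's own proof is a one-line reference to Example \ref{ex:high_separable}, i.e.\ to the curves $x^ty^{pe}+z^{pe+t}$ with $\gcd(p,e)=\gcd(p,t)=1$; since $d=pe+t\equiv t\pmod p$, those curves only realize degrees $d>p$ with $p\nmid d$, so the cases $d=p$ and $p\mid d$ asserted in the statement are not actually covered there (and the reduction to an arbitrary $\ell$, $H_\infty$, $Q$ is left implicit). You instead (i) reduce cleanly to the preceding Proposition by checking that $o:=o_\ell^{\perp}$ lies in $H_\infty\setminus Q$ and that $(o_\ell^\perp)^\perp=o_\ell$, using that the polarity of $Q$ on $\bbP V_\infty$ is an involution fixing exactly the two points of $Q$; (ii) move the strange point by a projective automorphism; and (iii) exhibit the explicit family $F_d=z^{d-p}y^p+x^d-xz^{d-1}$, which is homogeneous of degree $d$, satisfies $\partial_yF_d\equiv 0$ (so every tangent line at a regular point passes through $(0:1:0)$), and whose irreducibility you verify correctly via Gauss's lemma plus the criterion that $y^p-a$ is irreducible over $\Bbbk(x,z)$ when $a\notin\Bbbk(x,z)^p$, detected here by the $x$-adic valuation $v_x(-c_0/c_1)=1\not\equiv 0\pmod p$. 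All steps check out (in particular $z\nmid c_0$ because of the monomial $x^d$, and the curve is not a line since $d\geq p\geq 3$). The net effect is that your argument proves the corollary as stated for every $d\geq p$, whereas the paper's citation of the example only yields it for $d>p$ with $p\nmid d$; this is a worthwhile repair, and it would be worth stating explicitly that your construction is needed precisely to handle the degrees divisible by $p$.
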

\begin{proof}
	Such curves are given by Example \ref{ex:high_separable} where $d = pe+t$ can be any integer larger than $d$. 
\end{proof}
\begin{example}\label{ex:pathology}
	Take $p = \char(\Bbbk) > 2$. Let $X$ and $Y$ be two different strange curves of degree larger than $p$ as the one constructed in Example \ref{ex:high_separable} having the same strange point. Then, from the latter results we have that $X^\perp = Y^\perp$ and they correspond to the line dual to the unique strange point. In the terminology of bottlenecks introduced in Section \ref{ssec:bottlenecks}, these give examples such that $B(X,Y) = X^\perp = Y^\perp$ is a line. 
\end{example}
In the next section we will see that in high characteristic, under the additional assumption of smoothness, the latter pathological example cannot happen.

\section{The proofs}\label{sec:proofs}
\subsection{Birationality of the normal map}
We consider the following notation.

Let $e$ be the separable degree of $\eta_X$. Our goal is to prove that $e = 1$. Let $U \subseteq X_{\rm reg}$ be the open subset such that $N_{X,a}$ meets $X$ at exactly $e$ points. Clearly, $e \leq d = \deg(X)$. 

As before, let $u : C \rightarrow X$ be the normalization of $X$. On $X$, we define the  equivalence relation
\[
	p \sim q \quad \Leftrightarrow \quad N_{X,p} = N_{X,q}.
\]
On the $e$-th symmetric power $S^e(U) = U^{\times e} / \frak{S}_e$, the set of $e$ points in the same equivalence class is an algebraic set. Hence, the equivalence relation defines an algebraic equivalence on the smooth curve $C$ together with a separable morphism $v : C \rightarrow D$ of degree $e$ where $D$ is a connected smooth curve.

Let $|V| = |\calO_C(1)| = |u^*\calO_X(1)|$ be the base point free degree $d$ net of divisors of $C$ which are the pull-backs by $u$ of the intersection of $X$ with the lines of $\bbP V$. Given an intersection $X \cap \ell = \{a_1,\ldots,a_d\}$ for a general line $\ell$, where we allow the case $a_i = a_j$ for some $i \neq j$, then we consider the divisor $E = u^*(a_1 + \ldots + a_d)$ on $C$ and the degree-$d$ divisor $v(E)$ on $D$. Since $v$ is a morphism between connected smooth curves, if two divisors $E_1$ and $E_2$ on $C$ are linear equivalent then also $v(E_1)$ and $v(E_2)$ are linear equivalent. We call $|W|$ the linear system of all degree $d$ linearly equivalent divisors obtained in this way. Hence, $\dim |W| \leq \dim |V| = 2$ and, since $|W|$ is base-point free, then $\dim |W| \geq 1$. 

Before giving the proof of our Theorem \ref{thm:birational}, we observe that the general normal line of a reflexive curve with geometric genus at least $2$ intersects the curve at exactly as many regular points as its degree.
\begin{remark}
	Assume that $X$ is reflexive or, equivalently, that $\eta_X$ is separable. Obviously, if $\deg(X) = d$, then no line is normal to more than $d$ points. In particular, $\eta_X$ has separable degree less or equal than $d$. Since, by Lemma \ref{lemma:product_degrees}, $\deg(\eta_X)\cdot \deg(X^\perp) > d$, we deduce that the curve $X^\perp$ is not a line, i.e., there is no point $o\in \bbP V$ such that every normal line to $X$ passes through the point $o$. 
	
	\textbf{Claim.} \textit{Let $g$ be the geometric genus of $X$. If $g \geq 2$, then $\#(X_{\rm reg} \cap \ell) = d$ for a general normal line $\ell$. }
	
	\begin{proof}[Proof of Claim]
		Fix a general point $p \in X_{\rm reg}$. Since the singular locus ${\rm Sing}(X)$ is a finite set of points, $N_{X,p} \cap {\rm Sing}(X) = \emptyset$. Hence, in order to prove the claim we have to prove that $N_{X,p}$ is not tangent to any point in $X_{\rm reg}$. Assume that $N_{X,p} = T_{X,q}$ for some other point $q \in X_{\rm reg}$. Since $Q$ is a non-degenerate quadric, then $T_{X,p} \neq N_{X,p}$ and therefore $q \neq p$. Thus, we deduce that a general normal line is a tangent line, and viceversa a general tangent line is a normal line. I.e., $X^\perp = X^\vee$ and, since $X$ is reflexivity, $\deg(X^\perp) = \nu$. Now, if $\deg \eta_X = e$, then we deduce that $e\nu = d+\nu$, i.e., $d = (e-1)\nu$. Since $e = \deg \eta_X$ divides $d = \deg X$, then $e = 2$. Observe that: (i) by construction, the smooth curve $D$ is the normalization of $X^\perp$; (ii) since the tangent map $\tau_X$ is birational onto its image and $X^\vee = X^\perp$, we have that $C$ is the normalization of $X^\perp$. Therefore, the curves $C$ and $D$ are isomorphic. Considering the morphism $v : C \rightarrow D$ defined above, from the Riemann-Hurwitz formula, we get that $2g-2 \leq \deg(v) (2g-2)$. Since $\deg(v) = e = 2$, we conclude that $g \leq 1$; contradiction.
	\end{proof}
\end{remark}

\begin{proof}[Proof of Theorem \ref{thm:birational}]
	Let $b$ be a general point of $D$ such that $b = v(a)$ with $a \in U$. 
	
	Let $z$ be the order of contact between $X$ and $T_{X,a}$. Assume for a moment that $z > 2$. Then $\char(\Bbbk) = p$ is a prime, the curve is not reflexive and $z$ is $p$-power \cite{hef89}. Since $z \leq d$, then $p \leq d$; contradiction. Hence, $z = 2$. 
	 
	 We deduce that $\dim |W| = 2$. Indeed, consider the following three divisors in $|W|$:
	 \begin{itemize}
	 	\item $v(E_0)$, where $E_0$ is the pull-back on $C$ of $X \cap \ell$ where $\ell$ is a general line not passing through $a$;
	 	\item $v(E_1)$, where $E_1$ is the pull-back on $C$ of $X \cap \ell$ where $\ell$ is a general line passing through $a$;
	 	\item $v(E_2)$, where $E_2$ is the pull-back on $C$ of $X \cap T_{X,a}$.
	 \end{itemize}
	We note that the three divisors have order $0,1$ and $2$ at the point $b$, respectively. Hence, they are independent and show that $\dim |W| = 2$.
	 
	 Let $E_e$ be the pull-back via $u$ of $X \cap N_{X,a}$. Then, $v(E_e)$ has order $e$ at the point $b$.
	  
	We assume by contradiction that $e \geq 2$:
	\begin{itemize}
		\item if $e \geq 3$, then $v(E_0), v(E_1), v(E_2), v(E_e)$ have three different orders at the point $b$ and therefore imply that $\dim |W| \geq 3$; contradiction;
		\item if $e = 2$ and $v(E_2) \neq v(E_e)$, then there is a linear combination of $E_2$ and $E_e$ which has order higher than two in $b$, implying again that $\dim |W| \geq 3$; contradiction;
		\item if $e = 2$ and $v(E_2) = v(E_e)$ then the surjective map
		\[v_* : |V| \rightarrow |W|, \quad E \mapsto v(E)\]
		is not injective because $E_2 \neq E_e$, i.e., $\dim |W| < 2$; contradiction.
	\end{itemize}
	 This implies $e = 1$ and concludes the proof.
\end{proof}

\subsection{Smooth curves with same curve of normal lines are equal}
We have seen in Section \ref{ssec:strange} examples in positive characteristic of pairs of strange curves $X \neq Y$ such that $X^\perp = Y^\perp$. We prove that this is not the case if we consider smooth plane curves of degree larger than four in characteristic zero or strictly smaller than the degrees.
\begin{proof}[Proof of Theorem \ref{thm:equal_curves}]
	Since $\char(\Bbbk) = 0$ or $\char(\Bbbk) \geq \max\{\deg(X),\deg(Y)\}$, then both $X$ and $Y$ are reflexive and, by Lemma \ref{lemma:separability}, $\eta_X$ and $\eta_Y$ are both separable. Theorem \ref{thm:birational} tells us that $\deg(\eta_X) = \deg(\eta_Y) = 1$. Therefore, by Lemma \ref{lemma:product_degrees}, $\deg X^\perp = \deg(X)^2$ and $\deg Y^\perp = \deg(Y)^2$. Hence, we deduce that $\deg(X) = \deg(Y)$.
	
	Now, note that since $X^\perp = Y^\perp$ and both $\eta_X$ and $\eta_Y$ have degree one, then there is a open subset $U \subseteq X_{\rm reg}$ such that for every $a \in U$ there is a unique point $f(a) \in Y_{\rm reg}$ such that $N_{X,a} = N_{Y,f(a)}$. Moreover, $f$ induces an isomorphism between $U \subseteq X_{\rm reg}$ and an open set $U' \subseteq Y_{\rm reg}$. By smoothness, such isomorphism extends to an isomorphism $f : X \rightarrow Y$.
	
	Since $\deg(X) = \deg(Y) \geq 4$, by \cite[Example 18(iii) at pg. 56]{acgh}, $|\calO_X(1)|$ is the unique net of degree-$d$ divisors on $X$ and analogously $|\calO_Y(1)|$ is the unique net of degree-$d$ divisors on $Y$. In other words, the isomorphism $f : X \rightarrow Y$ is uniquely determined by an automorphism $h \in {\rm Aut}(\bbP V)$. Let $h^\vee \in {\rm Aut}(\bbP V^\vee)$ be the dual automorphism. Since $h(X) = Y$ and $X^\perp = Y^\perp$, then $h^\vee$ defines an automorphism on $X^\perp$. Since $\eta_X : X \rightarrow X^\perp$ is birational by Theorem \ref{thm:birational} and $X$ is smooth, then $X$ is the normalization of $X^\perp$. Thus, $h^\vee$ induces an automorphism on $X$ which will be again induced by a unique automorphism $h_1 \in {\rm Aut}(\bbP V)$. 
	
	Now, the automorphisms $h^\vee$ and $h_1^\vee$ on $\bbP V$ induce the same map on the curve $X^\perp$, i.e., $h^\vee = h_1^\vee$. Hence,
	\[
		h_1 = h_1^{\vee\vee} = h^{\vee\vee} = h.
	\]
	In particular,
	\[
		Y = h(X) = h_1(X) = X. \qedhere
	\] 
\end{proof}

\end{document}